\newtheorem{thm}{Theorem}
\newtheorem{prop}[thm]{Proposition}
\newtheorem{dfn}[thm]{Definition}
\newtheorem{lemm}[thm]{Lemma}
\def\be{\begin{equation}}
\def\ee{\end{equation}}
\def\R{\mathbb{R}}
\def\Ri{\mathbb{R}\cup \{+\infty\}}
\def\pa{\partial }
\def\dom{\mathrm{dom}\,}
\def\l{\langle}
\def\r{\rangle}
\def\cl{\mathrm{cl}\,}
\def\d{\delta}
\def\la{\lambda}
\def\ol{\overline}
\begin{document}
\title{Barrier functions in the subdifferential theory}

\author{Milen Ivanov\thanks{Supported by Bulgarian National Scientific Fund under grant KP-06-Í22/4.}\\
{\footnotesize Radiant Life Technologies Ltd.}\\
{\footnotesize Nicosia, Cyprus}\\
{\footnotesize milen@radiant-life-technologies.com}\\
      \and
        Nadia Zlateva\thanks{Supported by  Scientific Fund of Sofia University under grant for 2019.}\\
{\footnotesize Faculty of Mathematics and Informatics}\\
{\footnotesize Sofia University}\\
{\footnotesize 5, James Bourchier Blvd.}\\
{\footnotesize 1164 Sofia, Bulgaria}\\
{\footnotesize zlateva@fmi.uni-sofia.bg}\\
        }
\date{}

\maketitle

\begin{abstract}
	We present a new method for proving Correa-Jofr\'e-Thibault theorem that monotonicity of subdifferential implies convexity of the function.
	
	This new method is based on barrier functions. Barrier functions help overcome some of the main technical difficulties when working with lower semicontinuous functions.
\end{abstract}
\textbf{2010 Mathematics Subject  Classification:} 49J52,   47H05,  52A41.\\
\textbf{Keywords:}  subdifferential,  monotonicity, convex function, barrier function

\section{Introduction}\label{intro}

In 1990's Correa, Jofr\'e and Thibault in series of papers proved that a convex lower semicontinuous function can be characterized by monotonicity property of its  subdifferential -- in reflexive Banach space for the Clarke subdifferential in \cite{C-J-T-1} and in  any Banach space for axiomatically introduced  subdifferential  in \cite{C-J-T-2} and for more general  axiomatic presubdifferential in \cite{C-J-T-3}.

The main tool for proving this characterization is the  Mean Value Theorem of Zagrodny~\cite{zagrodni} which holds for a lower semicontinuous function in a Banach space for any presubdifferential, see \cite{thibault-MVT}.

Jules and Lassonde prove a subdifferential test for optimality, see \cite{ju_lasond} of Minty type \cite{minty} involving a subdifferential satisfying certain axioms.

In setting up the axiomatic framework we follow \cite{tz}, but  we pick the apparently minimal set of axioms under which proofs can work. In this way our results are slightly more general. As mentioned below, adding another natural axiom can significantly simplify the presentation.


We work in a real Banach space $X$ with dual $X^*$.

\begin{dfn}[axioms for subdifferential]\label{dfn:1}
Multi-valued operator $\pa $  which associates to any function $f:X\to \Ri$ and any $x\in X$ a (possibly empty) subset $\pa f(x)\subset X^*$ is feasible subdifferential if
\begin{enumerate}
  \item [\rm (P1)] $\pa f(x)=\pa ^c f(x)$  whenever $f$ is a convex and continuous function in a neighbourhood $U$ of $x$, where $\pa ^c$ stands for the Fenchel subdifferential, i.e.
 \[
 \pa ^c f(x):=\{ x^*\in X^*:\l x^*, y-x\r \le f(y)-f(x), \ \forall y\in U\};
 \]
  \item [\rm (P2)] For $f$ lower semicontinuous and $g$  convex and continuous in a neighbourhood of  $x\in\dom f$,
  \begin{equation}\label{eq:feas}
    0\in \pa g(x)+\limsup_{y\to x,\ f(y)\to f(x)} \pa f(y)
  \end{equation}
  whenever $x$ is a local minimum point of $f+g$.
\end{enumerate}
\end{dfn}

In more details \eqref{eq:feas} means that there are $y_n^*\in\pa f(y_n)$ such that $y_n\to x$, $f(y_n)\to f(x)$ and the sequence $y_n^*$ converges in the $w^*$ topology to some $y^*$ such that $-y^*\in \pa g(x)$.

Note  that all presubdifferentials considered by Correa, Jofr\'e and Thibault in \cite{C-J-T-1,C-J-T-2,C-J-T-3}, as well as subdifferentials considered by Jules and Lassonde in \cite{ju_lasond} are feasible subdifferentials in the  sense of Definition~\ref{dfn:1}.

In terms of Ioffe's extensive classification, see \cite{ioffe-subd-class}, (P1) is called \textit{contiguity}, while (P2) is weak-star form of \textit{trustworthness}.

We prove the above mentioned results for feasible subdifferentials and in a different and unified way -- by using barrier functions instead of (a variant of) Zagrodny Theorem.

For convenience of notation  we often identify the map $\pa f:X\rightrightarrows X^*$ with its graph, that is, $(x,x^*)\in \pa f$ is a shorthand for $x^*\in \pa f(x)$.

\medskip
The main contribution of this work is a new method, based on \cite{iz:max-mon}, see also \cite[p.569]{penot:book}, for proving the following result of Correa, Jofr\'e and Thibault.

\begin{thm}[Correa-Jofr\'e-Thibault]\label{thm:CJT}
	Let $X$ be a Banach space and let $\pa $ be a feasible subdifferential.
	
	Let $f:X\to\R\cup\left\{ +\infty\right\} $ be a proper lower semicontinuous function.
	
	If $\pa f$ is monotone, then $f$ is convex.
\end{thm}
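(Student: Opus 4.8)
The plan is to argue by contraposition: assuming $f$ is not convex, I will produce two points related by $\pa f$ that violate monotonicity. \emph{Reduction to a segment.} If $f$ is not convex, then by the very definition of convexity there are $a_0,a_1\in\dom f$ and $t_0\in(0,1)$ such that, with $x_0:=(1-t_0)a_0+t_0a_1$,
\[
 f(x_0)>(1-t_0)f(a_0)+t_0f(a_1),
\]
all three values finite. Let $\ell$ be an affine function on $X$ whose restriction to $[a_0,a_1]$ is the chord through $(a_0,f(a_0))$ and $(a_1,f(a_1))$, and set $h:=f-\ell$; then $h$ is proper and lower semicontinuous, $h(a_0)=h(a_1)=0$, $h(x_0)>0$, and along the compact segment $[a_0,a_1]$ the function $h$ climbs from $0$ to the positive value $h(x_0)$ on $[a_0,x_0]$ and descends back to $0$ on $[x_0,a_1]$. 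Write $d:=a_1-a_0$. Below one only ever differentiates $f$ itself: the term $-\ell$ is always carried inside the convex functions to which (P2) is applied, so no sum rule for $\pa$ is needed, and in the final inequality the gradient of $\ell$ cancels.

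\emph{Barrier functions in place of Zagrodny's mean value inequality.} On the sub-segment $[a_0,x_0]$ I would introduce a barrier function $\beta$ for the relatively open sub-segment $(a_0,x_0)$: a convex function, finite and continuous there, equal to $+\infty$ off a thin tube around the sub-segment, and tending to $+\infty$ at the two endpoints. For a suitable functional $q$ with $\l q,d\r>0$ and a suitable scalar $\sigma>0$, consider the minimisation of
\[
 x\ \longmapsto\ f(x)-\ell(x)+\beta(x)-\sigma\l q,x\r .
\]
Since $h$ is merely lower semicontinuous this infimum need not be attained, so I would invoke Ekeland's variational principle to obtain an exact minimiser $z_k$ of the same expression plus $\varepsilon_k\|\cdot-z_k\|$, with $\varepsilon_k\downarrow 0$. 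The barrier confines $z_k$ to the interior of the tube, where $\beta-\ell-\sigma\l q,\cdot\r+\varepsilon_k\|\cdot-z_k\|$ is convex and continuous; hence (P2) applies and, after a diagonal choice, yields $(u_k,u_k^*)\in\pa f$ with $u_k\to z_k$, $f(u_k)\to f(z_k)$, and $u_k^*$ equal, up to $O(\varepsilon_k)$, to $\nabla(\ell+\sigma\l q,\cdot\r)$ minus a subgradient of $\beta$ at $z_k$. The barrier keeps $\|u_k^*\|$ bounded and keeps $z_k$ in the tube; and — choosing $\sigma$, as in Zagrodny's argument, so that the minimiser lands where $h$ has genuinely positive slope — one obtains $\l u_k^*-\nabla\ell,d\r>0$ for $k$ large. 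The mirror construction on $[x_0,a_1]$, where $h$ descends, with the opposite tilt, produces $(v_k,v_k^*)\in\pa f$ with $v_k$ inside a thin tube about $[x_0,a_1]$, $\|v_k^*\|$ bounded, and $\l v_k^*-\nabla\ell,d\r<0$ for $k$ large.

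\emph{Conclusion.} Up to an error tending to $0$ with the tube widths, $u_k$ lies on $[a_0,x_0]$ and $v_k$ on $[x_0,a_1]$, so $v_k-u_k=\rho_k d+e_k$ with $\rho_k\ge 0$ and $\|e_k\|\to 0$; moreover $\l u_k^*-\nabla\ell,d\r$ and $-\l v_k^*-\nabla\ell,d\r$ are bounded below by a fixed positive constant, while $\|u_k^*\|,\|v_k^*\|$ are bounded above. Since $\nabla\ell$ cancels,
\[
 \l v_k^*-u_k^*,\,v_k-u_k\r=\rho_k\,\l v_k^*-u_k^*,d\r+\l v_k^*-u_k^*,e_k\r ,
\]
and for $k$ large — with $\rho_k$ bounded away from $0$, which the choice of sub-segments and of the tube widths secures — the right-hand side is strictly negative, contradicting monotonicity of $\pa f$.

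The hard part is the middle step. For a merely lower semicontinuous $h$ none of the auxiliary minimisations is a priori solvable, and even the Ekeland approximants could drift to the endpoints of a sub-segment — destroying the ordering along $d$ — or yield subgradients of the wrong sign, because $h$ may oscillate on the segment. The barrier function is precisely what pins the approximate minimisers into the relative interior of the sub-segment and into a thin tube around the line, which is what makes (P2) applicable there; the linear tilt then fixes the sign of the slope that (P2) reads off. Calibrating the growth of the barrier, the width of the tube, the functional $q$, and the size $\sigma$ of the tilt so that (P2) returns subgradients that are \emph{simultaneously} norm-bounded, correctly ordered along $d$, and of the prescribed sign — bearing in mind that (P2) delivers these at nearby points $u_k\neq z_k$ with $f(u_k)$ only close to $f(z_k)$, so every estimate must be stated with room to spare — is the technical core of the argument that replaces the mean value inequality.
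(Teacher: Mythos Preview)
Your approach is quite different from the paper's and is, in spirit, exactly the Zagrodny-based route that the paper sets out to replace. The paper never argues by contradiction: it defines
\[
g(\ol x):=\sup\{f(x)+\l x^*,\ol x-x\r:\ (x,x^*)\in\pa f\},
\]
shows $f\le g$ unconditionally (Proposition~\ref{prop:convex}), and then invokes monotonicity just once, in Theorem~\ref{thm:jl:mon}, to obtain $\pa f\subset\pa^c f$, which immediately gives $g\le f$; hence $f=g$ is convex. Barriers appear only inside the proofs of those two auxiliary results, via the single technical Lemma~\ref{lem:p2-reform}, and no calibration against the oscillation of $f$ along a segment is ever required.

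Your plan instead uses barriers to reproduce a two-sided mean value inequality and then manufacture a monotonicity violation. You are candid that the calibration of $\beta$, $q$, $\sigma$ and the tube width is the crux and is not carried out; that already is a genuine gap. Two further points need flagging. First, for the minimisation of $f-\ell+\beta-\sigma\l q,\cdot\r$ over the open tube to be nonvacuous you need $\dom f$ to meet that open tube; nothing in ``$f$ not convex'' guarantees this --- $\dom f$ could meet $[a_0,a_1]$ only at the endpoints, and your reduction (``all three values finite'') also excludes the case $f(x_0)=+\infty$. Second, the claim that ``$\rho_k$ bounded away from $0$'' is secured by the choice of sub-segments and tube widths is not justified as written: it would follow if the barriers for the two tubes blow up at $x_0$ and the Ekeland points stay in a fixed sublevel set of $\beta$, but that in turn needs $\dom f$ to meet each open tube --- the same unproved hypothesis. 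The paper's $f=g$ argument sidesteps all of these issues by never localising to a segment.
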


Recall that monotonicity of $\pa f$ means
$$
	(x_i,x_i^*)\in \pa f,\ i=1,2 \Rightarrow \langle x_2^*-x_1^*,x_2-x_1\rangle\ge 0.
$$

The routine way of demonstrating the above result can be sketched like this: examining the proof in \cite{thibault-MVT} it is clear that Zagrodny Mean Value Theorem holds for any feasible subdifferential. Using it, one can prove a Minty test for optimality \cite{minty} of the following form
$$
	\langle x^\ast, x-x_0\rangle \ge 0,\ \forall (x,x^*)\in\pa f \Rightarrow 0\in \pa^c f(x_0),
$$
where $f$ is proper and lower semicontinuous and $\pa$ is feasible subdifferential.

If -- on top of (P1) and (P2) -- $\pa$ also satisfies the natural axiom
\begin{enumerate}
\item[\rm (P3)]
    $\pa(f+x^*) = \pa f + x^*,\quad\forall x^*\in X^*$
\end{enumerate}
(called in \cite{ju_lasond} \emph{stability property} which is rather limited form of \textit{calculability} axiom in \cite{ioffe-subd-class}), then from Minty test immediately follows that $\partial\cup \partial ^c$ has the somewhat surprising property (first noted by Jules and Lasonde) to be \textit{maximal with respect to monotonicity relation}. That is, if $(x_0,x_0^*)$ is monotonous related to $\pa f$:
$$
	\langle x^*-x_0^*,x-x_0\rangle \ge 0, \quad \forall (x,x^*)\in \pa f,
$$
then $(x_0,x_0^*)\in \pa^c f$.  From the latter Moreau-Rockafellar Theorem about maximal monotonicity of $\pa^cf$ for convex, proper and lower semicontinuous $f$ follows immediately, but the surprising fact is that the above is true even if $\pa f$ is not itself monotone. (For precise statement see Theorem~\ref{thm:jl:mon}.)

So, in particular if $\pa$ is feasible and satisfies in addition  (P3), then
$$\pa f\subset \pa ^c f,$$
whenever  $\pa f$ is monotone. Further the proof can  be completed as we do in here presented  proof of Theorem~\ref{thm:CJT}.

Note that the additional axiom (P3) is not really necessary.

Our approach is based on a different technique involving barrier functions instead of Zagrodny Theorem. We will also make the effort to obtain Theorem~\ref{thm:CJT} for general feasible subdifferental.

The paper is organized as follows.

In Section~\ref{sec:barier} we construct and consider the class of barrier functions we use. In Section~\ref{sec:special} we show some additional properties of the feasible subdifferential linked to (P2) axiom. Finally, in Section~\ref{sec:main} we present our proof of Correa-Jofr\'e-Thibault theorem.

\section{Barrier functions}\label{sec:barier}

Let $U\subset X$ be a open, convex  and bounded neighbourhood of $0$, i.e. $0\in U$. Let $\mu$ be the \textit{Minkowski functional} of $U$, that is,
\[
\mu (x)=\inf \{ \la : \la >0,\ x\in \la U\} \mbox{ for }x\neq0,\ \mu(0)=0.
\]
It is clear that $\mu (x) < 1$ if $x\in U$,  and $\mu (x)\ge 1$ if $x\not \in U$.

Let us first list few properties of Minkowski functional, see e.g. \cite{Clarke}:
\begin{itemize}
  \item [(i)] $\mu $ has values in $[0,+\infty)$;
  \item [(ii)] $\mu $ is positively homogeneous, i.e. $\mu (tx)=t\mu (x)$ for all $x\in X$ and $t\ge 0$;
  \item[(iii)] $\mu(x+y)\le \mu(x)+\mu (y)$ for all $x,y\in X$ and hence $\mu $ is convex;
     \item[(iv)] $\{ x: \mu (x)\le 1\}=\cl U\Rightarrow \{ x: \mu (x) = 1\}=\pa U:=\cl U\setminus U$, where $\cl U$ denotes the topological closure of $U$ and $\pa U$ denotes its boundary.
   \end{itemize}

   Moreover,
   \begin{itemize}
  \item [(v)] There exists $c>0$ such that $\mu (x)\le c\| x\|$ for all $x\in X$;
  \item [(vi)] There exists $b>0$ such that $\mu (x)\ge b\| x\|$ for all $x\in X$;
   \item [(vii)] $\mu$ is  Lipschtz  continuous.
\end{itemize}

Indeed, let $\d >0$ be such that $\d B_X\subset U$, where $B_X:=\{ x\in X:\| x\| \le 1\}$. Then $\displaystyle \frac{\delta x}{\| x\|}\in \d B_X\subset U$, so $\displaystyle \mu \left( \frac{\delta x}{\| x\|} \right) \le 1$, and $\mu (x)\le \delta^{-1}\| x\|$, which is (v) with $c:=\delta^{-1}$.

Further, let $U\subset sB_X$, then $\displaystyle \frac{sx}{\| x\|} \not \in U$, so $\displaystyle \mu \left( \frac{sx}{\| x\| }\right) \ge 1$, and $\displaystyle \mu (x)\ge \frac{1}{s}\| x\|$ giving (vi) with $b:=s^{-1}$.

Finally, $\mu (x)=\mu ((x-y)+y)\le \mu (x-y)+\mu (y)$ by (ii). Hence, $\mu (x)-\mu(y)\le \mu (x-y)$ and using (vi) we get $\mu (x)-\mu(y)\le b\| x-y\|$ which yields $|\mu (x)-\mu(y)|\le b\| x-y\|$ for all $x,y\in X$ and (vii) holds.

\medskip

For $x\in U$,  define the function
\begin{equation}\label{def:k}
  k(x):=\frac{\mu (x)}{1-\mu (x)}=\frac{1}{1-\mu (x)}-1.
\end{equation}

The next lemma shows that $k$ is a \textit{barrier function} for $U$, i.e. a continuous function whose values tend to infinity while the arguments tend to $\pa U$ (see e.g. \cite{nocri}).

\begin{lemm}\label{lem:k_proberties}
The function $k$ defined by \eqref{def:k} has the following properties:
\begin{itemize}
  \item[{\rm (a)}] $k(0)=0$ and for some $b>0$, $k(x)\ge \mu(x)\ge b\| x\|$.
  \item[{\rm (b)}] $\displaystyle \lim _{x\to \pa U} k(x)=+\infty$;
   \item[{\rm (c)}] $k$ is convex and Lipschitz on each level set $\{ x: k(x)\le \lambda\}$;
  \item[{\rm (d)}] the function $\displaystyle \ol k(x):=\left\{ \begin{array}{ll} k(x),&x\in  U\\ +\infty,&x\in X\setminus U\end{array}\right.$ is lower semicontinuous and convex.

\end{itemize}
\end{lemm}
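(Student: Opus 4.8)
The plan is to exploit directly the formula $k(x) = \frac{1}{1-\mu(x)} - 1$ together with the seven listed properties of the Minkowski functional $\mu$, treating $k$ as the composition of $\mu$ with the scalar function $\varphi(t) = \frac{t}{1-t}$ on $[0,1)$. For (a): $\mu(0)=0$ gives $k(0)=0$ immediately; since $\mu(x) \in [0,1)$ on $U$ we have $1-\mu(x) \le 1$, hence $\frac{\mu(x)}{1-\mu(x)} \ge \mu(x)$, and then (vi) gives $\mu(x) \ge b\|x\|$. For (b): as $x \to \partial U$, property (iv) gives $\mu(x) \to 1$, so $1-\mu(x) \to 0^+$ and $k(x) \to +\infty$; I would phrase this carefully as: for every $\lambda$ there is $\rho < 1$ with $k(x) > \lambda$ whenever $\mu(x) > \rho$, and $\{\mu(x) > \rho\}$ is a neighborhood of $\partial U$ relative to $U$.

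For (c) I would argue that $\varphi(t) = \frac{t}{1-t} = \frac{1}{1-t}-1$ is convex and increasing on $[0,1)$, and that a nondecreasing convex function composed with a convex function is convex — this is the standard fact, and it applies since $\mu$ is convex by (iii); alternatively, one can simply check convexity of $k$ by hand using convexity and positive homogeneity of $\mu$. For the Lipschitz claim on $\{x : k(x) \le \lambda\}$: on this set $\mu(x) \le \frac{\lambda}{1+\lambda} =: \rho_\lambda < 1$, so $1 - \mu(x) \ge 1 - \rho_\lambda > 0$ is bounded away from $0$; since $k(x) - k(y) = \frac{\mu(x)-\mu(y)}{(1-\mu(x))(1-\mu(y))}$, on the level set the denominator is bounded below by $(1-\rho_\lambda)^2$, and combining with the Lipschitz bound $|\mu(x)-\mu(y)| \le b\|x-y\|$ from (vii) yields a Lipschitz constant $b/(1-\rho_\lambda)^2$ for $k$ there.

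For (d), convexity of $\ol k$ follows because its epigraph is the intersection of $\epi k$ (convex, by (c), intersected with the convex set $U \times \R$) with the half-spaces forcing $+\infty$ outside $U$; more concretely, $\ol k$ agrees with $k$ on the convex set $U$, is $+\infty$ off $U$, and convexity of such an extension reduces to checking the Jensen inequality with one endpoint in $U$ and one outside, which is trivially satisfied since the right side is $+\infty$. Lower semicontinuity is the only point needing a little care: at interior points of $U$, $k$ is continuous by (c), so $\ol k$ is l.s.c. there; at points $x_0 \in \partial U$ (where $\ol k(x_0) = +\infty$), any sequence $x_n \to x_0$ either lies eventually outside $U$ (values $+\infty$) or has a subsequence in $U$ with $\mu(x_n) \to \mu(x_0) = 1$ by continuity (vii), forcing $k(x_n) \to +\infty$; hence $\liminf \ol k(x_n) = +\infty = \ol k(x_0)$. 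I expect the main obstacle to be nothing deep — only the bookkeeping in (c) to get a clean explicit Lipschitz constant valid uniformly on each level set, and the small case analysis in (d) to handle boundary points where $\mu$ equals exactly $1$; everything else is routine manipulation of the elementary function $\varphi$ and the already-established properties of $\mu$.
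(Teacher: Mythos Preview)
Your proposal is correct and follows essentially the same route as the paper for (a), (b), and (d): the paper simply declares (a) and (b) ``clear'' and derives (d) from (b) and (c), while you spell out the details. The one genuine difference is in the convexity part of (c): the paper proves convexity of $g(x)=1/(1-\mu(x))$ by a bare-hands algebraic verification of the inequality $\frac{1}{\lambda\alpha+(1-\lambda)\beta}\le \frac{\lambda}{\alpha}+\frac{1-\lambda}{\beta}$ (i.e.\ it re-derives convexity of $t\mapsto 1/t$ from scratch), whereas you invoke the standard composition lemma ``nondecreasing convex $\circ$ convex is convex'' applied to $\varphi(t)=t/(1-t)$ and $\mu$. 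Your route is shorter and more conceptual; the paper's has the virtue of being completely self-contained. For the Lipschitz claim on level sets the paper only says it is ``inherited by the Lipschitz continuity of $\mu$,'' while you actually compute an explicit constant---this is more than the paper gives but entirely in the same spirit. One small cosmetic point: the Lipschitz constant for $\mu$ comes from property (v) (the constant $c$), not (vi); the paper itself has the same slip in its proof of (vii).
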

\begin{proof}
The properties (a) and (b) are clear.

In order to prove that $k$ is convex, it is enough to show that the function $\displaystyle g(x):=k(x)+1= \frac{1}{1-\mu (x)}$ is convex.

To this end, fix $x,\ y\in U$ and $\la \in [0,1]$.
From~(i)
\[
\mu (\la x+(1-\la )y)\le \la \mu (x)+(1-\la )\mu (y) \Leftrightarrow
\]
\[
1-\mu (\la x+(1-\la )y)\ge \la (1-\mu (x))+(1-\la )(1-\mu (y) ).
\]
Set $\alpha:=1-\mu(x)$, $\beta:=1-\mu (y)$, so $\alpha,\beta\in (0,1)$. Then the latter says
\be\label{ge}
g(\la x+(1-\la) y)\le \frac{1}{\la \alpha+(1-\la)\beta}.
\ee

We claim that
\be\label{star}
\frac{1}{\la \alpha+(1-\la) \beta}\le \frac{\la}{\alpha}+\frac{1-\la}{\beta}.
\ee
Of course, (\ref{star}) is equivalent to
\begin{eqnarray*}
0&\le& \la \beta(\la \alpha+(1-\la)\beta)+(1-\la)\alpha(\la \alpha+(1-\la )\beta)-\alpha\beta\\
 & = &\la (1-\la)(\alpha^2+\beta^2)+(\la^2+(1-\la)^2-1)\alpha\beta\\
 &=&\la (1-\la)(\alpha^2+\beta^2)+(2\la^2-2\la)\alpha\beta\\
 &=&\la(1-\la)(\alpha-\beta)^2.
\end{eqnarray*}
From (\ref{ge}) and (\ref{star}) it follows that
\[
g(\la x+(1-\la )y)\le \la g(x)+(1-\la )g(y)
\]
and $k$ is convex. Lipschitz continuity of $k$ on each level set is inherited by the Lipschitz continuity of $\mu$.

(d) follows from (b) and (c).
\end{proof}

\section{Properties of feasible subdifferential}\label{sec:special}

For the sake of clarity, we will take two technical parts out of the proof of the main result. Namely, we will show some additional properties of feasible subdifferential linked to (P2) axiom.

\begin{lemm}\label{lem:p2-reform}
	Let $\pa$ be a feasible subdifferential. Let $f:X\to \R\cup\{+\infty \}$ be lower semicontinuous and bounded below on the open, convex and bounded set $U\subset X$. Let, moreover, $\dom f \cap U\neq\emptyset$. Let $g$ be convex continuous and bounded below barrier function for $U$. Let $\bar x\in X$be fixed.
	
	Then there exist sequences $(x_n)_{n=1}^\infty,(y_n)_{n=1}^\infty\subset U$ and $x_n^*\in\pa f(x_n)$, $y_n^*\in\pa ^c g(y_n)$ such that
	\begin{equation}
		\label{eq:t-l:xy:close}
		\|x_n-y_n\|\to 0,
	\end{equation}
	\begin{equation}
		\label{eq:t-l:inf}
		f(x_n)+g(y_n)\to \inf_U(f+g),
	\end{equation}
		\begin{equation}
			\label{eq:t-l:sum}
			\langle x_n^*,x_n-\bar x\rangle + \langle y_n^*,y_n-\bar x\rangle\to 0.
		\end{equation}
\end{lemm}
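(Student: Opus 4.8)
The plan is to collapse the pair $(f,g)$ on $U$ into a single lower semicontinuous function on all of $X$, to pull an approximate minimizer off the boundary of $U$ by Ekeland's variational principle, and then to feed the resulting almost-local-minimum into axiom (P2).

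First I would extend $g$ by $+\infty$ outside $U$: put $\ol g:=g$ on $U$ and $\ol g:=+\infty$ on $X\setminus U$. Since $g$ is a barrier function for $U$, the argument behind Lemma~\ref{lem:k_proberties}(d) shows that $\ol g$ is lower semicontinuous (and convex); hence $\varphi:=f+\ol g$ is proper (because $\dom f\cap U\neq\emptyset$ and $g$ is real-valued there), lower semicontinuous, and — as $f$ and $g$ are bounded below on $U$ — bounded below on $X$, with $m:=\inf_X\varphi=\inf_U(f+g)\in\R$. For each $n$ I would take $u_n$ with $\varphi(u_n)<m+n^{-3}$ and apply Ekeland's variational principle (with $\e=n^{-3}$, $\la=n^{-2}$) to get $v_n\in X$ such that $\varphi(v_n)\le\varphi(u_n)<m+n^{-3}$ and
\[
\varphi(v_n)\le\varphi(x)+\tfrac1n\|x-v_n\|\qquad\text{for all }x\in X .
\]
Since $\ol g(v_n)\le\varphi(v_n)-\inf_U f<+\infty$ this forces $v_n\in U\cap\dom f$, and on a neighbourhood of $v_n$ the function $\ol g$ coincides with the convex continuous function $g$; so, setting $g_n:=g+\tfrac1n\|\,\cdot-v_n\|$ (convex and continuous near $v_n$), the point $v_n$ is a local minimum of $f+g_n$.

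Next I would apply (P2) to $f$ and $g_n$ at $v_n$: this gives, for each $n$, points $y_{n,k}\to v_n$ with $f(y_{n,k})\to f(v_n)$, functionals $y_{n,k}^*\in\pa f(y_{n,k})$, and $y_{n,k}^*\to w_n^*$ in the $w^*$-topology as $k\to\infty$, with $-w_n^*\in\pa g_n(v_n)$. By (P1) and the Moreau--Rockafellar sum rule, $\pa g_n(v_n)=\pa^c g_n(v_n)=\pa^c g(v_n)+\tfrac1n B_{X^*}$, so $-w_n^*=y_n^*+q_n^*$ for some $y_n^*\in\pa^c g(v_n)$ and $q_n^*$ with $\|q_n^*\|\le 1/n$; put $y_n:=v_n$, so $y_n\in U$ and $y_n^*\in\pa^c g(y_n)$. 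Then I would diagonalize in $k$: for fixed $n$ the $w^*$-convergent sequence $(y_{n,k}^*)_k$ is norm bounded (Banach--Steinhaus) and $\|y_{n,k}-v_n\|\to0$, whence $\l y_{n,k}^*,y_{n,k}-v_n\r\to0$ and $\l y_{n,k}^*,v_n-\bar x\r\to\l w_n^*,v_n-\bar x\r$ as $k\to\infty$. Hence I can choose $k(n)$ so large that, with $x_n:=y_{n,k(n)}$ and $x_n^*:=y_{n,k(n)}^*$, one has $x_n\in U$, $\|x_n-v_n\|<1/n$, $|f(x_n)-f(v_n)|<1/n$, $|\l x_n^*,x_n-v_n\r|<1/n$ and $|\l x_n^*-w_n^*,v_n-\bar x\r|<1/n$.

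With these choices \eqref{eq:t-l:xy:close} is clear since $\|x_n-y_n\|=\|x_n-v_n\|<1/n$; \eqref{eq:t-l:inf} follows from $f(x_n)+g(y_n)=\bigl(f(x_n)-f(v_n)\bigr)+\varphi(v_n)\to m$; and for \eqref{eq:t-l:sum} one writes
\[
\l x_n^*,x_n-\bar x\r+\l y_n^*,y_n-\bar x\r=\l x_n^*,x_n-v_n\r+\l x_n^*-w_n^*,v_n-\bar x\r+\l w_n^*+y_n^*,v_n-\bar x\r ,
\]
where the first two summands are $<1/n$ in modulus and the last is at most $\tfrac1n\|v_n-\bar x\|\le C/n$ with $C:=\sup_{u\in U}\|u-\bar x\|<\infty$ (here the boundedness of $U$ is used). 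The step I expect to be the main obstacle is converting the purely $w^*$ output of (P2) into the quantitative bound $\l x_n^*,x_n-v_n\r\to0$: this is exactly what forces the two-index construction and the appeal to Banach--Steinhaus, and it is worth stressing that no bound on $\|y_{n,k}^*\|$ uniform in $n$ is available — nor, as the argument shows, needed. A minor but essential point is that it is precisely the barrier property of $g$ that makes $\ol g$, and hence $\varphi$, lower semicontinuous, so that Ekeland's principle applies while at the same time $v_n$ stays in $\inter U$, where $g$ is convex and continuous.
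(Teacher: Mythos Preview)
Your argument is correct and follows essentially the same route as the paper: Ekeland's principle on $f+\ol g$ to produce an approximate minimizer in $U$, then axiom (P2) with the convex perturbation $g+\tfrac1n\|\cdot-v_n\|$, the Moreau--Rockafellar sum rule to split off $y_n^*\in\pa^c g(y_n)$, and a diagonalization using Banach--Steinhaus to control $\langle x_n^*,x_n-v_n\rangle$. If anything, you are more explicit than the paper about why the Ekeland point lies in $U$ (via the barrier extension $\ol g$) and why $f+g_n$ has a genuine local minimum there.
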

\begin{proof}
	Define
	\begin{equation}
		\label{eq:bigM-def}
		M:= \sup \{\|y-\bar x\| :\ y\in U\}.
	\end{equation}
	Since $U$ is bounded, $M\in\R$.
	
	Fix a sequence $(\varepsilon_n)_{n=1}^\infty$ such that $\varepsilon_n>0$ and $\varepsilon_n\to 0$ as $n\to\infty$.
	
	Pick $z_n\in U$ such that
	$$
		f(z_n) + g(z_n) < \inf_U (f+g) + \varepsilon_n.
	$$
	
	By Ekeland Variational Principle there are $y_n\in U$ such that
	\begin{equation}
		\label{eq:eps-min}
		f(y_n) + g(y_n) < \inf_U (f+g) + \varepsilon_n,
	\end{equation}
	and the function
	$$
		x \to f(x) + g(x) + \varepsilon\|x-y_n\|
	$$
	attains its minimum at $y_n$. By (P2) there are $u_k\to y_n$, as $k\to \infty$, and $u_k^*\in\pa f(u_k)$ such that $f(u_k)\to f(y_n)$ and
	$$
		w^*{-}\lim u_k^*=u^*\in -\pa(g+\varepsilon_n\|\cdot-y_n\|)(y_n).
	$$
	By Sum Theorem for the Fenchel subdifferential and the fact that $\pa\varepsilon_n\|\cdot-y_n\|=\varepsilon_n\pa \|\cdot-y_n\| $ it follows that there are $y_n^*\in\pa g(y_n)$ such that
	\begin{equation}
		\label{eq:xn-close-u}
		\|u^*+y_n^*\|\le\varepsilon_n.
	\end{equation}
	Since $(u_k,f(u_k))\to (y_n,f(y_n))$ as $k\to\infty$, there is $k_1\in\mathbb{N}$ such that
	\begin{equation}
		\label{eq:u-conv}
		\|u_k-y_n\|<\varepsilon_n,\ |f(u_k)-f(y_n)|<\varepsilon_n,\quad\forall k>k_1.
	\end{equation}
	By \eqref{eq:eps-min} we have
	\begin{equation}
		\label{eq:eps-min-y}
		|f(u_k) + g(y_n) - \inf_U (f+g)| \le 2 \varepsilon_n, \quad\forall k>k_1.
	\end{equation}
	Note that
	$$
		\langle u_k^*,u_k-\bar x\rangle + \langle y_n^*,y_n-\bar x\rangle = \langle u^*+y_n^*,y_n-\bar x\rangle + \langle u_k^*-u^*,y_n-\bar x\rangle + \langle u_k^*,u_k-y_n\rangle.
	$$
	Now, $|\langle u^*+y_n^*,y_n-\bar x\rangle|\le\|u^*+y_n^*\|\|y_n-\bar x\|\le M\varepsilon_n$ by \eqref{eq:bigM-def} and \eqref{eq:xn-close-u}. Since $u_k^*$ weak-star converges to $u^*$, we have $\langle u_k^*-u^*,y_n-\bar x\rangle\to 0$ as $k\to\infty$. Also, since by Banach-Steinhaus  Theorem the sequence $(u_k^*)_{k=1}^\infty$ is bounded, and $u_k\to y_n$ as $k\to\infty$, we have $|\langle u_k^*,u_k-y_n\rangle|\le \|u_k^*\|\|u_k-y_n\|\to 0$  as $k\to\infty$. Therefore, there is $k_2\in\mathbb{N}$ such that
	\begin{equation}
		\label{eq:tech-last}
		|\langle u_k^*,u_k-\bar x\rangle + \langle y_n^*,y_n-\bar x\rangle| \le (M+1) \varepsilon_n, \quad\forall k>k_2.
	\end{equation}
	Set
	$$
		\bar k = \max \{k_1,k_2\}+1\mbox{ and } (x_n,x_n^*) := (u_{\bar k},u_{\bar k}^*).
	$$
	From \eqref{eq:u-conv}, \eqref{eq:eps-min-y} and \eqref{eq:tech-last} it follows that so constructed sequences $(x_n,x_n^*)_{n=1}^\infty$ and $(y_n,y_n^*)_{n=1}^\infty$ satisfy \eqref{eq:t-l:xy:close}, \eqref{eq:t-l:inf} and \eqref{eq:t-l:sum}.	
\end{proof}

\begin{prop}
	\label{prop:convex}
	Let $\pa$ be a feasible subdifferential. Let $f:X\to \R\cup\{+\infty \}$ be a proper and lower semicontinuous function. Then $\dom \pa f$ is nonempty and
	\begin{equation}
		\label{eq:conv:form}
		f(\ol x) \le \sup\{f(x) + \langle x^*,\ol x-x\rangle:\ (x,x^*)\in\pa f \}, \mbox{ for all } \ol x\in X.
	\end{equation}	
\end{prop}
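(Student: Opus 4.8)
The plan is to fix $\ol x\in X$ and, by applying Lemma~\ref{lem:p2-reform} to scaled barrier functions, to produce points $(x_n,x_n^*)\in\pa f$ along which $f(x_n)+\langle x_n^*,\ol x-x_n\rangle$ is eventually close to $\inf_U(f+\lambda k)$; the data $U$, $k$, $\lambda$ will then be arranged so that $\inf_U(f+\lambda k)$ can be pushed up to $f(\ol x)$. Write $S$ for the right-hand side of \eqref{eq:conv:form}. To build $U$: fix $x_0\in\dom f$ (possible, $f$ being proper); since $f$ never takes the value $-\infty$, lower semicontinuity provides a neighbourhood of every point of $X$ on which $f$ is bounded below, and since $[\ol x,x_0]$ is compact, for a sufficiently small $\rho>0$ the set $U:=\{x:\mathrm{dist}(x,[\ol x,x_0])<\rho\}$ --- which is open, convex (a Minkowski sum of a segment and a ball), bounded, and contains both $\ol x$ and $x_0$ --- satisfies $f\ge m$ on $U$ for some $m\in\R$. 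Applying Lemma~\ref{lem:k_proberties} to $U-\ol x$ and translating back, let $k$ be the barrier function of $U$ centred at $\ol x$: it is convex and continuous on $U$, a barrier for $U$, and $k\ge k(\ol x)=0$ with $k(x)\ge b\|x-\ol x\|$ for some $b>0$ and all $x\in U$.

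Now fix $\lambda>0$ and apply Lemma~\ref{lem:p2-reform} to $f$, the set $U$, the barrier $g:=\lambda k$ (convex, continuous on $U$, bounded below, a barrier for $U$) and the point $\ol x$. This yields $x_n,y_n\in U$, $x_n^*\in\pa f(x_n)$ and $y_n^*\in\pa^c(\lambda k)(y_n)=\lambda\,\pa^c k(y_n)$ with $\|x_n-y_n\|\to0$, $f(x_n)+\lambda k(y_n)\to\inf_U(f+\lambda k)$ and $\langle x_n^*,x_n-\ol x\rangle+\langle y_n^*,y_n-\ol x\rangle\to0$. The mere existence of $x_n^*$ already shows $\dom\pa f\ne\emptyset$. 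By convexity of $k$ the subgradient inequality for $y_n^*$ holds on all of $U$, so testing it at $\ol x$ and using $k(\ol x)=0$ gives $\langle y_n^*,\ol x-y_n\rangle\le-\lambda k(y_n)$, i.e.\ $\langle y_n^*,y_n-\ol x\rangle\ge\lambda k(y_n)\ge0$. Combining this with the third limit above, $\langle x_n^*,\ol x-x_n\rangle=\langle y_n^*,y_n-\ol x\rangle-o(1)\ge\lambda k(y_n)-o(1)$, hence
\[
S\ \ge\ f(x_n)+\langle x_n^*,\ol x-x_n\rangle\ \ge\ f(x_n)+\lambda k(y_n)-o(1),
\]
and letting $n\to\infty$ (with $f(x_n)+\lambda k(y_n)\to\inf_U(f+\lambda k)$) we obtain $S\ge\inf_U(f+\lambda k)$ for every $\lambda>0$.

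It remains to estimate $\inf_U(f+\lambda k)$ from below. Fix a real $\alpha<f(\ol x)$. Lower semicontinuity of $f$ at $\ol x$ gives $r>0$ with $f(x)>\alpha$ whenever $\|x-\ol x\|<r$, whereas for $x\in U$ with $\|x-\ol x\|\ge r$ we have $k(x)\ge br$ and $f(x)\ge m$, so $f(x)+\lambda k(x)\ge m+\lambda br$. Hence $\inf_U(f+\lambda k)\ge\min\{\alpha,\,m+\lambda br\}$, which equals $\alpha$ as soon as $\lambda$ is large enough. Together with the previous paragraph, $S\ge\alpha$ for every real $\alpha<f(\ol x)$, i.e.\ $S\ge f(\ol x)$; this is precisely \eqref{eq:conv:form}. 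The case $f(\ol x)=+\infty$ is included, for then $\alpha$ ranges over all of $\R$ and $S$ is forced to equal $+\infty$.

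I expect the one genuinely delicate point to be the construction of $U$, not the estimates: a ball about $\ol x$ will not serve, because in an infinite-dimensional space a lower semicontinuous function can be unbounded below on every ball large enough to meet $\dom f$, which would put Lemma~\ref{lem:p2-reform} out of reach. Compactness of the segment $[\ol x,x_0]$ is exactly what lets us enclose a bounded, open, convex ``corridor'' from $\ol x$ to a point of $\dom f$ on which $f$ stays bounded below, while keeping the geometry good enough that $k\ge b\|\cdot-\ol x\|$ on $U$. The remaining ingredients --- translating Lemma~\ref{lem:k_proberties}, the identity $\pa^c(\lambda k)=\lambda\,\pa^c k$, and the $o(1)$-bookkeeping --- are routine.
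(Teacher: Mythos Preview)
Your proof is correct and follows essentially the same route as the paper's: build the open convex ``corridor'' $U$ around the segment from $\ol x$ to a point of $\dom f$ on which $f$ is bounded below, apply Lemma~\ref{lem:p2-reform} with $g=\lambda k(\cdot-\ol x)$, use the Fenchel subgradient inequality at $\ol x$ to convert $\langle y_n^*,y_n-\ol x\rangle$ into $\lambda k$, and then let the barrier scale $\lambda$ grow. Your bookkeeping is in fact marginally cleaner than the paper's, since by working directly with $f(x_n)+\lambda k(y_n)\to\inf_U(f+\lambda k)$ from \eqref{eq:t-l:inf} you avoid the extra step of invoking Lipschitz continuity of $k$ on level sets to replace $k(y_n-\ol x)$ by $k(x_n-\ol x)$.
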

\begin{proof}
	Fix arbitrary $\bar x\in X$ and
	$$
		r < f(\bar x).
	$$
	Fix some $y\in\dom f$.
	
	Since $f$ is lower semicontinuous and the segment $[\bar x,y]$ is compact, there is $\delta >0$ such that $f$ is bounded below on the set
	$$
	C := [\bar x,y] + \delta B_X^\circ.
	$$
	Let $k$ be the barrier function defined by \eqref{def:k} for the set $U:=C-\{\bar x\}$.
	
	Let $\xi >0$ be such that $f>r$ on $\bar x + \xi B_X$. Fix $a>0$ and such that
	$$
		a > (r-\inf_U f)/(\xi b),
	$$
where $b>0$ and $k(x)\ge \mu(x)\ge b\| x\|$ (see Lemma~\ref{lem:k_proberties} (a)).

	Then it is immediate that
	\begin{equation}
		\label{eq:fpluskr}
		f(x) + ak(x-\bar x) > r,\quad\forall x\in U.
	\end{equation}
	Apply Lemma~\ref{lem:p2-reform} to $f$ and the barrier function $g=ak(\cdot -\ol x)$ to get $x_n^*\in \pa f(x_n)$ and $y_n^*\in \pa ^c k(\cdot-\bar x)(y_n)$ such that \eqref{eq:t-l:xy:close} and \eqref{eq:t-l:inf} are fulfilled and, moreover,
	\begin{equation}
		\label{eq:prop:1}
		\langle x_n^*,x_n-\bar x\rangle +a \langle y_n^*,y_n-\bar x\rangle= \alpha_n,\mbox{ where }\lim_{n\to\infty}\alpha_n = 0.
	\end{equation}
	Since $y_n^*\in  \pa ^c k (y_n-\bar x)$, we have
	$$
		k(0) \ge k(y_n-\bar x) + \langle y_n^*, \bar x-y_n\rangle \iff \langle y_n^*, y_n-\bar x\rangle  \ge  k(y_n-\bar x),
	$$
	
	From the boundedness below of $f$ and \eqref{eq:t-l:inf} it follows that the sequence $(k(y_n-\bar x))_{n=1}^\infty$ is bounded. Since $k$ is Lipschitz on its level sets (see Lemma~\ref{lem:k_proberties} (c)), from \eqref{eq:t-l:xy:close} it follows that
	$$
		k(y_n-\bar x) = k(x_n-\bar x) + \beta_n,\mbox{ where }\lim_{n\to\infty}\beta_n = 0.
	$$
	These and \eqref{eq:prop:1} give $\langle x_n^*,x_n-\bar x\rangle = \alpha_n - a\langle y_n^*,y_n-\bar x\rangle \le \alpha_n - ak(y_n-\bar x) = - ak(x_n-\bar x) +  \alpha_n -a\beta _n$. So,
	$$
		\langle x_n^*,x_n-\bar x\rangle \le  - ak(x_n-\bar x) +\gamma_n,\mbox{ where }\lim_{n\to\infty}\gamma_n = 0.
	$$
	From \eqref{eq:fpluskr} it follows that $ 	\langle x_n^*,x_n-\bar x\rangle < f(x_n) -r + \gamma_n$, or, equivalently,
	$$
		r <f(x_n) + \langle x_n^*,\bar x- x_n\rangle + \gamma_n.
	$$
	Therefore,
	\begin{eqnarray*}
		r &\le& \sup\{f(x_n) + \langle x_n^*,\bar x- x_n\rangle:\ n\in\mathbb{N}\}\\
		&\le&  \sup\{f(x) + \langle x^*,\bar x-x\rangle:\ (x,x^*)\in\pa f \}.	
	\end{eqnarray*}
	Since $	r < f(\bar x)$ were arbitrary, we are done.
\end{proof}

\section{Monotonicity and convexity}\label{sec:main}

We start with the following extension to the case of feasible subdifferential of a result of Jules and Lassonde \cite{ju_lasond}.

\begin{thm}
	\label{thm:jl:mon}
  Let $\pa$ be a feasible subdifferential. Let $f: X\to \mathbb{R}\cup\{+\infty\}$ be a proper lower semicontinuous function. Let $(x_0,x_0^*)\in X\times X^*$ be in monotone relation to $\pa f$, that is,
  \begin{equation}\label{eq:mon-rel}
    \langle x^*-x_0^*,x-x_0\rangle\ge 0,\quad \forall (x,x^*)\in \pa f.
  \end{equation}
  Then $(x_0,x_0^*)\in\pa^c f$.
\end{thm}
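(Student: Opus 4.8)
The plan is to run the barrier argument behind Proposition~\ref{prop:convex}, but with the linear functional $x_0^*$ folded into the barrier rather than subtracted from $f$ — which keeps us inside axioms (P1)--(P2) and is exactly what lets us dispense with (P3). We must prove $\langle x_0^*,\bar y-x_0\rangle\le f(\bar y)-f(x_0)$ for every $\bar y\in X$; this is trivial when $\bar y\notin\dom f$, so we fix $\bar y\in\dom f$ and a real number $r<f(x_0)$ and aim to show
$$
f(\bar y)-\langle x_0^*,\bar y-x_0\rangle\ge r .
$$
Letting $r\uparrow f(x_0)$ will then finish; note also that, since $\dom f\neq\emptyset$, establishing this inequality for some $\bar y\in\dom f$ automatically forces $f(x_0)<+\infty$, so that the assertion $(x_0,x_0^*)\in\partial^c f$ is meaningful.

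Following the proof of Proposition~\ref{prop:convex}, lower semicontinuity of $f$ on the compact segment $[x_0,\bar y]$ gives $\delta>0$ with $f$ bounded below on $C:=[x_0,\bar y]+\delta B_X^\circ$; let $k$ be the barrier function \eqref{def:k} for $U:=C-\{x_0\}$, and for a parameter $a>0$ set
$$
g_a(x):=a\,k(x-x_0)-\langle x_0^*,x-x_0\rangle,\qquad x\in C .
$$
Since the linear term is bounded on the bounded set $C$, the function $g_a$ is convex, continuous, bounded below, and still a barrier for $C$, so Lemma~\ref{lem:p2-reform} applies to $f$ and $g_a$ (with $C$ in the role of $U$ and $\bar x:=x_0$) and yields sequences $x_n,y_n\in C$, $x_n^*\in\partial f(x_n)$, $y_n^*\in\partial^c g_a(y_n)$ with $\|x_n-y_n\|\to0$, $f(x_n)+g_a(y_n)\to\inf_C(f+g_a)$, and $\langle x_n^*,x_n-x_0\rangle+\langle y_n^*,y_n-x_0\rangle\to0$.

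The heart of the matter is that the monotonicity hypothesis \eqref{eq:mon-rel} squeezes these sequences to $x_0$. Writing $y_n^*+x_0^*=a\,z_n^*$ with $z_n^*\in\partial^c k(y_n-x_0)$ (Fenchel subdifferential sum rule for a convex function plus an affine one) and evaluating the subgradient inequality for $k$ at $0\in U$, where $k(0)=0$, gives $\langle z_n^*,y_n-x_0\rangle\ge k(y_n-x_0)$, hence $\langle y_n^*,y_n-x_0\rangle\ge a\,k(y_n-x_0)-\langle x_0^*,y_n-x_0\rangle$. Substituting this together with $\langle x_n^*,x_n-x_0\rangle=-\langle y_n^*,y_n-x_0\rangle+o(1)$ into the monotone relation $\langle x_n^*,x_n-x_0\rangle\ge\langle x_0^*,x_n-x_0\rangle$ yields
$$
a\,k(y_n-x_0)\le\langle x_0^*,y_n-x_n\rangle+o(1)\to0 ,
$$
because $\|x_n-y_n\|\to0$. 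As $a>0$ is fixed, $k(y_n-x_0)\to0$, so $\|y_n-x_0\|\le b^{-1}k(y_n-x_0)\to0$ by Lemma~\ref{lem:k_proberties}(a), and therefore $x_n\to x_0$ as well. Consequently $g_a(y_n)\to0$, and combining this with $f(x_n)+g_a(y_n)\to\inf_C(f+g_a)$ and the lower semicontinuity of $f$ at $x_0$ gives $\inf_C(f+g_a)=\lim_n f(x_n)\ge f(x_0)>r$.

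Since $\bar y\in C$, it follows that $f(\bar y)+g_a(\bar y)\ge\inf_C(f+g_a)\ge r$, i.e.
$$
f(\bar y)-\langle x_0^*,\bar y-x_0\rangle\ge r-a\,k(\bar y-x_0) .
$$
As $k(\bar y-x_0)$ is a fixed finite number independent of $a$, this holds for arbitrarily small $a>0$, whence $f(\bar y)-\langle x_0^*,\bar y-x_0\rangle\ge r$, which is what we wanted. I expect the main difficulty to be precisely the bookkeeping that keeps (P3) out of the proof: $x_0^*$ cannot be subtracted from $f$ without losing control of $\partial f$, so it has to ride on the convex barrier $g_a$, and the price — the term $a\,k(\bar y-x_0)$, which in the presence of (P3) would simply cancel — must be absorbed by shrinking the barrier weight $a$. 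The other point requiring care is the squeezing step: it is the interplay of the lower bound $k\ge b\|\cdot\|$ of Lemma~\ref{lem:k_proberties}(a) with the monotonicity hypothesis that pins the minimizing sequences of Lemma~\ref{lem:p2-reform} down to $x_0$, and this is what takes over the role played by the Zagrodny mean value theorem (or the Minty test) in the classical approach.
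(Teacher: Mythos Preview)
Your proof is correct and follows essentially the same barrier argument as the paper: fold $-\langle x_0^*,\cdot-x_0\rangle$ into the convex barrier $g_a$, apply Lemma~\ref{lem:p2-reform}, use the monotone relation together with the subgradient inequality $\langle z_n^*,y_n-x_0\rangle\ge k(y_n-x_0)\ge b\|y_n-x_0\|$ to force $y_n\to x_0$, and then let $a\downarrow 0$. The only cosmetic difference is your detour through the auxiliary level $r<f(x_0)$: since your squeezing step already yields $\inf_C(f+g_a)\ge f(x_0)$ directly (and hence $f(\bar y)-\langle x_0^*,\bar y-x_0\rangle\ge f(x_0)-a\,k(\bar y-x_0)$), the parameter $r$ and the final passage $r\uparrow f(x_0)$ are redundant --- indeed the paper's own proof introduces $r$ in \eqref{eq:r-less-f} but never uses it.
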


\begin{proof}
	Let $r\in\R$ be such that
	\begin{equation}
		\label{eq:r-less-f}
		r < f(x_0).
	\end{equation}
	Let $y\in\dom f$ be arbitrary.
	
	Since $f$ is lower semicontinuous and the segment $[x_0,y]$ is compact, there is $\delta >0$ such that $f$ is bounded below on
	$$
		C := [x_0,y] + \delta B_X^\circ.
	$$
	Let $k$ be the barrier function defined by \eqref{def:k} for the set  $U:=C-\{x_0\}$.
	
	Let $a>0$ be arbitrary.
	
	Obviously,
	$$
		g(x) := -\langle x_0^*,x-x_0\rangle + a k(x-x_0)
	$$
	is a convex and continuous barrier for $U$. So, we can apply Lemma~\ref{lem:p2-reform} to  $f$ and $g$. Since
	$$
		\pa ^c g = -x_0^* +a \pa ^c k(\cdot-x_0),
	$$
	there are $x_n^*\in \pa f(x_n)$ and $y_n^*\in \pa ^c k(\cdot-x_0)(y_n)$ such that \eqref{eq:t-l:xy:close} and \eqref{eq:t-l:inf} are fulfilled and, moreover,
	\begin{equation}
		\label{eq:main:1}
		\langle x_n^*,x_n-x_0\rangle +\langle -x_0^* +ay_n^*,y_n-x_0\rangle\to 0.
	\end{equation}
	Clearly, $\langle x_0^*,x_n-y_n\rangle\to 0$, so \eqref{eq:main:1} is equivalent to
	$$
		\langle x_n^*-x_0^*,x_n-x_0\rangle + a\langle y_n^*,y_n-x_0\rangle \to 0.
	$$
	This and \eqref{eq:mon-rel} give
	\begin{equation}
		\label{eq:main:2}
		\limsup_{n\to\infty} \langle y_n^*,y_n-x_0\rangle \le 0.
	\end{equation}
    Since $y_n^*\in \pa^c k(\cdot -x_0)(y_n)$, we have
    \begin{equation}
		\label{eq:main:3}
    	k(0) \ge k(y_n-x_0) + \langle y_n^*, x_0-y_n\rangle \iff \langle y_n^*, y_n-x_0\rangle  \ge  k(y_n-x_0),
   \end{equation}
    because $k(0)=0$. But $k(y_n-x_0)\ge b\|y_n-x_0\|$ (cf.  Lemma~\ref{lem:k_proberties} (a)), so \eqref{eq:main:2} and \eqref{eq:main:3} imply that
    $$
    		\limsup_{n\to\infty} \| y_n-x_0\| \le 0\iff \lim _{n\to\infty} y_n = x_0.
    $$
    From \eqref{eq:t-l:xy:close} it follows that $x_n\to x_0$ as well; and from the lower semicontinuity of $f$ and \eqref{eq:t-l:inf} it follows that
    $$
    	f(x_0)+g(x_0) = \inf _U (f+g).
    $$
    In particular,
    $$
    	f(y) - \langle x_0^*,y-x_0\rangle +a k(y-x_0)\ge f(x_0).
    $$
    Since $a>0$ was arbitrary,
    $$
    	f(y) \ge f(x_0)+  \langle x_0^*,y-x_0\rangle.
    $$
    So, $x_0\in \dom f$ and, since $y\in\dom f$ was arbitrary, $x_0^*\in\pa ^c f(x_0)$.
\end{proof}

After all the above development, the proof of Correa-Jofr\'e-Thibault is now almost immediate.

\setcounter{thm}{1}

\begin{thm}[Correa-Jofr\'e-Thibault]
Let $X$ be a Banach space and $\pa $ be a feasible subdifferential.

Let $f:X\to\R\cup\left\{ +\infty\right\} $ be a proper lower semicontinuous function.

If $\pa f$ is monotone, then $f$ is convex.
\end{thm}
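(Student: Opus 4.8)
The plan is to combine the two results already established in the excerpt: Theorem~\ref{thm:jl:mon} and Proposition~\ref{prop:convex}. The crucial (and essentially only) observation is that monotonicity of $\partial f$ means precisely that \emph{every} pair $(x_1,x_1^*)\in\partial f$ stands in monotone relation to $\partial f$ in the sense of \eqref{eq:mon-rel}. Indeed, the inequality $\langle x^*-x_1^*,x-x_1\rangle\ge 0$ for all $(x,x^*)\in\partial f$ is just the defining inequality of monotonicity read with $(x_0,x_0^*)$ taken to be $(x_1,x_1^*)$.

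Consequently, Theorem~\ref{thm:jl:mon} applies to each $(x_1,x_1^*)\in\partial f$ and gives $(x_1,x_1^*)\in\partial^c f$; that is, $\partial f\subset\partial^c f$. Unwinding the definition of the Fenchel subdifferential, this says that for every $(x,x^*)\in\partial f$ and every $\bar x\in X$ one has $f(\bar x)\ge f(x)+\langle x^*,\bar x-x\rangle$. Taking the supremum over $(x,x^*)\in\partial f$ yields
\[
	f(\bar x)\ \ge\ \sup\{f(x)+\langle x^*,\bar x-x\rangle:\ (x,x^*)\in\partial f\},\qquad \bar x\in X,
\]
while Proposition~\ref{prop:convex} provides the reverse inequality (and also guarantees $\dom\partial f\neq\emptyset$, so that the supremum is over a nonempty index set). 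Hence equality holds:
\[
	f(\bar x)=\sup\{f(x)+\langle x^*,\bar x-x\rangle:\ (x,x^*)\in\partial f\},\qquad \bar x\in X.
\]

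The right-hand side is the pointwise supremum of a nonempty family of continuous affine functionals of $\bar x$, hence convex (and lower semicontinuous); therefore $f$ is convex, which is what we had to prove. I do not expect a genuine obstacle at this stage: all the substantive work has been carried out in Section~\ref{sec:special} and in Theorem~\ref{thm:jl:mon}. The only points requiring a word of care are the explicit remark that a monotone operator is in monotone relation to itself (so that Theorem~\ref{thm:jl:mon} is applicable pairwise), and the observation that the representing family of affine functions is nonempty (via $\dom\partial f\neq\emptyset$), so that the elementary ``supremum of affine functions is convex'' argument genuinely applies and $f$ is not vacuously handled.
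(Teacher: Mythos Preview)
Your proof is correct and follows essentially the same route as the paper: define the convex envelope $g(\bar x)=\sup\{f(x)+\langle x^*,\bar x-x\rangle:(x,x^*)\in\pa f\}$, use Proposition~\ref{prop:convex} for $f\le g$, and use Theorem~\ref{thm:jl:mon} together with monotonicity to get $\pa f\subset\pa^c f$ and hence $f\ge g$. Your write-up simply spells out in more detail why $\pa f\subset\pa^c f$ yields $f\ge g$ and why the supremum is over a nonempty family, points the paper leaves implicit.
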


\begin{proof}
	Consider $g:X\to \Ri$ defined as
	$$
		g(\ol x) := \sup\{f(x) + \langle x^*,\ol x-x\rangle:\ (x,x^*)\in\pa f \}.
	$$
	As a supremum of linear functions, $g$ is convex and lower semicontinuous.
	
	By \eqref{eq:conv:form} we have that
	$$
		f\le g.
	$$
	From Theorem~\ref{thm:jl:mon} and monotonicity of $\pa f$ we have that $\pa f\subset \pa ^c f$, which implies $f\ge g$. Therefore,
	$$
		f = g.
	$$
\end{proof}

\end{document}